\documentclass[letterpaper, 10 pt, conference]{ieeeconf}  

\IEEEoverridecommandlockouts                              
\usepackage[utf8]{inputenc} 
\usepackage[T1]{fontenc}    
\usepackage{hyperref}       
\usepackage{amssymb,amsfonts,amsmath,mathrsfs,bm,bbm}
\usepackage{graphicx}
\usepackage{gensymb}
\usepackage{epstopdf}
\usepackage{float}
\usepackage{graphicx}
\usepackage{epsfig}
\usepackage{dsfont}
\usepackage{color}
\usepackage{xcolor}
\usepackage{url}
\usepackage{cite}
\usepackage{booktabs}       
\usepackage{nicefrac}       
\usepackage{microtype}      

\newcommand{\mul}{\underline{\mu}}
\newcommand{\muu}{\overline{\mu}}
\newcommand{\ql}{\underline{q}}
\newcommand{\qu}{\overline{q}}
\newcommand{\R}{\mathbbm{R}}
\newcommand{\rev}{\mathcal{R}}
\newcommand{\s}{\mathcal{S}}
\newcommand{\m}{\mathcal{M}}
\newcommand{\Lag}{\mathcal{L}}

\newtheorem{theorem}{Theorem}
\newtheorem{assumption}{Assumption}
\newtheorem{condition}{Condition}

\newtheorem{lemma}{Lemma}

\title{\LARGE \bf
General Revenue Adequacy Conditions for Energy Transport Networks
} 
\author{Sidhant Misra$^1$, Marc Vuffray$^1$, Anatoly Zlotnik$^1$, and Aleksandr M. Rudkevich$^2$
\thanks{This project was supported by the LDRD program and the Center for Nonlinear Studies at Los Alamos National Laboratory, as well as the Advanced Grid Modeling Research program of the U.S. DOE Office of Electricity. Research conducted at Los Alamos National Laboratory is done under the auspices of the National Nuclear Security Administration of the U.S. Department of Energy under Contract No. 89233218CNA000001. Report No. LA-UR-26-20175.}
\thanks{$^1$\{sidhant, vuffray, azlotnik\}@lanl.gov, \,\,  Theoretical Division, Los Alamos National Laboratory, Los Alamos, NM, USA 87545}
\thanks{$^2$arudkevich@negll.com, \,\, Newton Energy Group, Boston, MA, USA 02116}}

\begin{document}

\maketitle
\thispagestyle{empty}
\pagestyle{empty}

\begin{abstract}
    Optimization is widely used to determine the physical and financial exchange of wholesale electricity in organized markets.  Guarantees of solution optimality and feasibility rest largely on convexity, which is not in general a characteristic of the governing equations for  power grid and gas pipeline networks.  Policy decisions that base the scheduling and locational pricing of electricity transactions on optimization rely on the guarantee of revenue adequacy, which ensures that the market administrator will collect enough payments in congestion rents to settle financial transmission rights.  Developing a similar mechanism for locational trade valuation of natural gas also requires assurance that pricing outcomes are revenue adequate, and also cover the costs of gas compressor operation.  However, it has been shown that the AC power flow equations are in general non-convex and hence conditions for guaranteeing revenue adequacy in optimal power flow solutions are challenging to generalize.   In this study, we develop a general formal mathematical setting for nonlinear physical network flows and examine the conditions for revenue adequacy.  The result is verified for DC and AC power flow as well as steady-state gas flow in a pipeline network.
\end{abstract}

\section{Introduction} \label{sec:intro}

A critical issue in commodity market design, particularly involving transportation and/or networks, is revenue adequacy \cite{macher2014revenue}.  Revenue adequacy was first examined in the context of railroad transport, where it refers to a higher return on investment than the cost of capital.   Substantial analysis of revenue adequacy has been developed for electricity transmission capacity markets \cite{harvey1996transmission}, where the term refers to the requirement for a market mechanism to guarantee an operating surplus under forseeable conditions, and has been used to justify major policy decisions \cite{oneill2012joint}.  However, many established theories on market design typically assume convex problems with linear constraints, which leads to inconsistencies in practice.

Mechanisms that utilize locational spot pricing, capacity rights, and congestion rents are extensively studied for the design of electricity markets \cite{schweppe2013spot}.  Spot pricing was developed as a mechanism to improve the efficiency of power transmission as part of a larger framework called homeostatic control, which includes longer-term contracts \cite{caramanis1982optimal}. The policies of independent system operators (ISOs), which are organizations set up to operate regulated electricity markets, typically are designed to ensure that enough congestion rents are collected to pay financial transmission rights (FTRs).  The ISOs clear the wholesale market by solving a series of optimal power flow (OPF) problems as mathematical programs for real-time and forward physical scheduling and pricing, and participants are subject to complex rules on when and for whom outcomes are operationally and financially binding \cite{sarkar2008comprehensive}.  The mechanisms in use today are based on DC OPF modeling \cite{harvey1996transmission}, and while locational marginal price (LMP) computations have been extended to AC power flow \cite{sarkar2011optimal}, obtaining a feasible result that is also revenue adequate in FTR settlement may require constraint relaxations and/or removal of problematic graph paths.  Although it is claimed that instances requiring such adjustments are very rare, this would depend on many factors including network topology and market conditions \cite{sarkar2011optimal}. 

Interestingly, it was proven that accounting for AC power flows in electricity markets, with nonlinear, non-convex, continuous OPF, revenue adequacy cannot be guaranteed even for simple networks \cite{lesieutre2005convexity}.  Though power injection at loads, generator active and reactive power supply, voltage magnitudes, and line flow capacities are assumed to be convex in the OPF, the cost function may be non-convex.  Specifically, detailed models of thermal generators may have non-convex and discontinuous operating costs, and researchers continue to investigate non-convexities and techniques to handle them \cite{narimani2018empirical}.  Recent studies examined revenue adequacy in shifting from linear to conic OPF frameworks \cite{ratha2023moving}, and emerging prosumer markets \cite{baroche2019prosumer}.

Auction-based pricing was also examined for pipeline transport of natural gas. A linear programming model was proposed for auctioning transportation rights \cite{united1987gas}, and was extended to nonlinear flow  \cite{rudkevich2017hicss}. There, revenue adequacy was shown for a general class of optimal gas flow (OGF) problems provided the network is adequately instrumented such that zero flow is always feasible \cite{rudkevich2017hicss}.  That result is based on properties of gas flow over networks under the usual physical operating regime, with monotonicity in solutions with respect to parameters \cite{vuffray2015monotonicity}.

In this study, we develop a formal setting for analysis of optimization-based market mechanisms for allocation and pricing of a commodity transported through a nonlinear physical flow network 
and examine the general conditions for revenue adequacy. Our approach is based on a decomposition of the decision variables in an optimal flow problem into those associated with traded quantities bounded and valued according to bids by participants, and the dependent physical variables that describe potentials and flows throughout the network.  The analysis of revenue adequacy depends on certain properties of the feasible set of solutions, as well as the Karush-Kuhn-Tucker (KKT) conditions for optimality.  

The manuscript proceeds as follows:  In Section \ref{sec:model}, we present a general physical flow allocation model and conditions on revenue adequacy.  Section \ref{sec:global_star_sets} shows global revenue adequacy results for DC OPF and steady-state gas flow problems. Section \ref{sec:acopf} shows a local property for AC OPF, and we conclude with policy implications in Section \ref{sec:conc}.

\section{General Model and Revenue Adequacy Result} \label{sec:model}

We consider an abstract market for trading quantities of a commodity exchanged among $N$ participants according to relations described by additional system state variables.  We suppose that the system state and control variables are also constrained by inequalities.  The constraints that specify conservation of the commodity entering or leaving the market and bounds on traded quantities are of particular interest and are identified separately from the remaining relations.

\vspace{-1ex}
\subsection{General Commodity Market Model}  \label{sec:flow_network}

Suppose that a general collection of relations represents a system used to realize a commodity exchange outcome among participants. The governing laws are specified by equations  
\begin{align} \label{eq:h_fun}
    h_j(x,y) = 0, \quad j=1,\ldots,N_h.
\end{align}
Here, we suppose that $x \in \R^{N}$ is a vector of $N$ system variables that are exposed to the market in the sense that they depend directly on quantities of the commodity being traded, i.e., supplied or consumed, by the $N$ users of the system. The variables $y \in \R^{N_y}$ denote quantities that are dependent on $x$ as well as those quantities that are controllable by the administrator of the system. The set of inequality constraints
\begin{align} \label{eq:g_fun}
    g_i(x,y) \leq 0, \quad i=1,\ldots,N_g
\end{align}
are used to represent limitations on the ability of the system to realize the commodity exchange, which in practice could depend on physical, engineering, legal, or other qualifications that affect the capacity in space and time. 
For compactness of notation, we introduce the vector valued functions 
\begin{subequations}
\begin{align} 
    H(x,y) = \{h_i(x,y), \ i=1,\ldots, N_h\}, \label{eq:H_fun_def} \\
    G(x,y) = \{g_i(x,y), \ i=1,\ldots, N_g\}, \label{eq:G_fun_def}
\end{align}
\end{subequations}
and denote the feasible set of the system by
\begin{align} \label{eq:feas_set}
    \s = \{(x,y) \mid H(x,y)=0, \ G(x,y) \leq 0\}.
\end{align}

Now let $q \in \R^N$ denote the vector of quantities of the commodity supplied to the system (or consumed, if negative) corresponding to each of the $N$ market participants.  We suppose that the quantities $q$ are to be optimized subject to minimum and maximum bound values $\underline{q}$ and $\overline{q}$, which are called \emph{nominations}.  The overall receipts by the system are given by the linear conservation law
\begin{align}  \label{eq:cons_law}
    x = q - d,
\end{align}
where $d \in \mathbb{R}^N$ denotes commodity outflows that are fixed \emph{a priori} and not determined by the market. We can then define the allowable set of market outcomes by
\begin{align} \label{eq:market_set}
    \mathcal{M} = \{(q,x) \mid x = q - d, \ \underline{q} \leq q \leq \overline{q}\}.
\end{align}
The problem of minimizing the cost $c(q)$ operating the system can be represented by the following optimization problem:
\begin{subequations}  \label{eq:min_cost_formulation}
\begin{align}
    \min_{q,x,y} \quad & c(q) & & \mbox{economic cost,}     \label{eq:economic_cost} \\
    \mbox{s.t.} \quad & \underline{q} \leq q \leq \overline{q} & \mul,\muu \qquad &\mbox{nomination limits,} \label{eq:q_bounds} \\
    & x = q - d & \lambda \qquad & \mbox{conservation law,} \label{eq:conservation_law} \\
    & H(x,y) = 0 & \nu_e \qquad & \mbox{system physics,} \label{eq:system_physics} \\
    & G(x,y) \leq 0. & \nu_i \qquad &\mbox{state \& control limits.}  \label{eq:engineering_limits}
\end{align}
\end{subequations}
The second column above lists the dual variables corresponding to each of the constraints. In some settings, we may seek to maximize the economic value or surplus produced for system users, in which case we express the economic cost to be minimized as the negative of the economic surplus, i.e., $c(q)=-s(q)$. The above optimization problem is in general non-convex and it may not be possible to efficiently solve the problem to global optimality.  The problem \eqref{eq:min_cost_formulation} can be expressed using the feasible system and market sets as
\begin{subequations}  \label{eq:min_cost_set_formulation}
\begin{align}
    \min_{q,x,y} \quad & c(q) && \mbox{economic cost,}     \label{eq:economic_cost_set} \\
    \mbox{s.t.} \quad & (q,x)\in\m, &&\mbox{allowable market outcomes,}\label{eq:m_feasibility} \\
    &(x,y)\in\s,  &&\mbox{system feasibility.}  \label{eq:s_feasibility}
\end{align}
\end{subequations}
With the formulation \eqref{eq:min_cost_set_formulation}, constraints specifying physical energy transport over the system are separated from constraints related to market participants that supply and consume energy, and these sets are decoupled in the subsequent analysis.

\vspace{-1ex}
\subsection{Revenue Adequacy}   \label{sec:rev_ad_general}

A local optimum for the extremal problem in formulation \eqref{eq:min_cost_formulation} can correspond to a collection of nominations for clearing a single or double sided market in a single auction. Let $(q,x,y)$ and $(\mul,\muu,\lambda,\nu_i,\nu_e)$ denote the primal-dual variables corresponding to such a local optimum. We investigate a particular pricing strategy, where the dual variables $\lambda$ of the conservation equations in \eqref{eq:conservation_law} are used to price the commodity $q$. The net revenue available to the system operator under such a pricing scheme is given by the sum of prices times inflows, \vspace{-2ex}
\begin{align}   \label{eq:revenue}
    \rev = \lambda^T(d-q) = -\lambda^T x. 
\end{align}
Any viable pricing strategy requires that net revenue is non-negative, i.e. the value of delivery is greater than cost of supply, and this condition, denoted by $\rev \geq 0$, is known as \emph{revenue adequacy}. We derive sufficient conditions for a market mechanism for commodity allocation and pricing stated in the form of problem \eqref{eq:min_cost_formulation} to be revenue adequate. 

Our first assumption ensures that the primal-dual solution pair $(q,x,y)$ and $(\mul,\muu,\lambda,\nu_i,\nu_e)$ that results from local extremization is regular. This can be enforced by several of the so-called constraint qualification conditions.  
\begin{condition}  \label{con:cq_condition}
The primal-dual set of variables satisfy the Mangasarian-Fromovitz constraint qualification (MFCQ) \cite{bertsekas1997nonlinear}. The gradients of the equality constraints are linearly independent at $(q,x,y)$, and there exists a vector $\delta\in\mathbb{R}^{N+N_y}$ such that $\nabla g_i(x,y)^T \delta < 0$ for all inequality constraints $i=1,\ldots,N_g$, and $\nabla h_j(x,y)^T \delta = 0$ for all equality constraints $j=1,\ldots,N_h$.
\end{condition}

Let $\s_x = \{x \mid (x,y) \in \s\}$ denote the projection of the feasible set $\s$ on the $x$ variables. Our second condition imposes a geometrical property on the projected set $\s_x$.
\begin{condition}  \label{con:local_condition}
The point $(x,y) \in \s$ is said to satisfy the locally star shaped (LSS) property if there exists $\epsilon\in(0,1)$ such that  $s x \in \s_x$ for all $s \in [1-\epsilon, 1]$.
\end{condition}
As will be shown later, Condition~\ref{con:local_condition} is sufficient for establishing revenue adequacy. The following is a stronger condition, which is useful to consider because it is satisfied by several prominent systems including linearized power grid networks and gas pipeline networks modeled using steady-state physics. 
\begin{condition}   \label{con:global_condition}
The set $\s_x$ satisfies the globally star shaped (GSS) property if for every $x \in \s_x$, $\{s x \mid s \in [0,1]\} \subseteq \s_x$.
\end{condition}
The following lemma follows directly from Condition \ref{con:global_condition}.
\begin{lemma}   \label{eq:condition2_to_1}
If $\s_x$ satisfies the GSS property, then every point $(x,y) \in \s$ satisfies the LSS property.
\end{lemma}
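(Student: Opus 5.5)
The plan is to argue directly from the definitions, since the GSS property in Condition~\ref{con:global_condition} is a statement about whole segments, while the LSS property in Condition~\ref{con:local_condition} only asks about a short piece of the same segment ending at $x$.

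Fix an arbitrary point $(x,y) \in \s$. First observe that $x \in \s_x$ by the definition of the projection $\s_x = \{x \mid (x,y) \in \s\}$, because the pair $(x,y)$ itself witnesses membership. Applying the GSS property to this $x$ then gives $\{s x \mid s \in [0,1]\} \subseteq \s_x$.

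Next, pick any $\epsilon \in (0,1)$; the concrete choice $\epsilon = 1/2$ suffices. Since $[1-\epsilon,1] \subseteq [0,1]$, every $s \in [1-\epsilon,1]$ satisfies $s x \in \s_x$ by the inclusion from the previous step. This is exactly the requirement of Condition~\ref{con:local_condition}, so $(x,y)$ has the LSS property. Because $(x,y)\in\s$ was arbitrary, the lemma follows.

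There is no real obstacle here. The only points needing care are noting that $x \in \s_x$ (so that GSS applies) and that LSS quantifies over the projected set $\s_x$ rather than $\s$. Consequently, no $y$-component has to be produced for the scaled points $s x$; this matches the way GSS is stated.
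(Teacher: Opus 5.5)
Your proof is correct and matches the paper's approach: the paper gives no separate proof and just says the lemma follows directly from Condition~\ref{con:global_condition}. Your argument spells out that step — $x \in \s_x$ via the witness $y$, GSS gives the whole segment, and any $\epsilon \in (0,1)$ works because $[1-\epsilon,1] \subseteq [0,1]$.
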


\noindent The main result of this paper is given in the following theorem.
\begin{theorem}[Revenue Adequacy]   \label{thm:revenue_adequacy_general}
Suppose that $(q,x,y)$ and $(\mul,\muu,\lambda,\nu_i,\nu_e)$ denote a primal-dual set of local optimal variables for \eqref{eq:min_cost_formulation} and assume that Condition~\ref{con:cq_condition} holds.  If  Condition~\ref{con:global_condition} holds, then the pricing scheme specified by $\lambda$ satisfies $\rev \geq 0$.  If only Condition~\ref{con:local_condition} holds, then $\lambda$ satisfies $\rev \geq 0$ also.
\end{theorem}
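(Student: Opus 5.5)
The plan is to rewrite $\rev=-\lambda^T x$ entirely in terms of the multipliers of the system constraints, and then use the star-shaped geometry of $\s_x$ to fix the sign. By Lemma~\ref{eq:condition2_to_1}, GSS implies LSS at every feasible point, so the first claim follows from the second. I will therefore prove only the LSS statement.

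\textbf{Step 1 (KKT in $x$ and $y$).} Take the Lagrangian
\[
\Lag = c(q) + \muu^T(q-\qu) + \mul^T(\ql-q) + \lambda^T(x-q+d) + \nu_e^T H(x,y) + \nu_i^T G(x,y).
\]
Condition~\ref{con:cq_condition} guarantees that the KKT conditions hold at the local optimum. Stationarity with respect to $x$ and $y$ gives
\begin{align*}
\lambda + \nabla_x H^T\nu_e + \nabla_x G^T\nu_i &= 0,\\
\nabla_y H^T\nu_e + \nabla_y G^T\nu_i &= 0,
\end{align*}
together with $\nu_i\ge 0$ and $\nu_i^T G(x,y)=0$. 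These are exactly the KKT conditions, at $(x,y)$, of the auxiliary problem
\[
\min_{(x',y')\in\s} \lambda^T x'.
\]
Pairing the first equation with $x$ and the second with any vector $w\in\R^{N_y}$ gives, for every $w$,
\[
\rev = -\lambda^T x = \sum_j \nu_{e,j}\,\nabla h_j(x,y)^T(x,-w) + \sum_i \nu_{i,i}\,\nabla g_i(x,y)^T(x,-w).
\]
Equivalently, $\rev = \lambda^T d'$ with $d'=(-x,w)$, for any choice of $w$.

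\textbf{Step 2 (reduce to a tangent direction).} It suffices to find $w$ such that $d'=(-x,w)$ lies in the tangent cone $T_\s(x,y)$. Under MFCQ the tangent cone equals the linearized cone $\{d' : \nabla h_j^T d'=0,\ \nabla g_i^T d'\le 0 \text{ for active } i\}$. For such a $d'$, every equality term in the expression above vanishes. Every inequality term is a product of some $\nu_{i,i}\ge 0$ with $\nabla g_i^T d'\le 0$ for active $i$, and inactive constraints have $\nu_{i,i}=0$ by complementarity. This yields $\lambda^T d' \le 0$, which gives the sign needed for revenue adequacy. I will fix the sign bookkeeping carefully at this point, since the direction $-x$ corresponds to the LSS ray $s\uparrow 1$ traversed toward the origin.

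\textbf{Step 3 (main obstacle: producing the tangent direction from LSS).} Condition~\ref{con:local_condition} gives, for each $s\in[1-\epsilon,1]$, some $y_s$ with $(sx,y_s)\in\s$. To conclude that $(-x,w)\in T_\s(x,y)$, I need $y_s$ to satisfy $\|y_s-y\| = O(1-s)$, after which a subsequence of $(y_s-y)/(1-s)$ converges to the desired $w$. LSS alone says nothing about where $y_s$ lies, so this is the crux. I would first try to invoke the metric regularity (Robinson stability) of the constraint system that MFCQ provides near $(x,y)$. The aim is to show that the distance from $(sx,y)$ to $\s$ is $O(1-s)$, and hence that a nearby feasible $y_s$ exists. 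If this only controls the distance in the joint $(x,y)$ variables, I would instead take the projected point $(x_s',y_s')\in\s$, which satisfies $\|(x_s',y_s')-(sx,y)\|=O(1-s)$. Then $(x_s'-x,\,y_s'-y)/(1-s)$ is bounded, and any limit point is a tangent vector of the form $(-x+e,w)$. The remaining task is to show that the error $e$ can be removed using the fact that $sx\in\s_x$ exactly. Failing that, I would read LSS as including a selection $y_s$ that is continuous (or Lipschitz) in $s$ with $y_1=y$, which is what is verified in the examples of the later sections. With such a tangent direction in hand, Step 2 finishes the proof.
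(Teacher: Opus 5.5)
\textbf{Verdict: genuine gap.} Your Step 3 is left open, and it cannot be closed under the stated hypotheses. The paper's proof has the same gap; it simply asserts the step.

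Your Steps 1--2 are the paper's argument. Its chain (a)--(d) is exactly your identity evaluated at a direction $(-x,\delta_y)$, ending at $-\nu_i^T(\nabla_x G\,\delta_x+\nabla_y G\,\delta_y)\ge 0$. Two small corrections:
\begin{itemize}
\item For $d'$ in the linearized cone, your identity gives $\rev=-\sum_i \nu_{i,i}\nabla g_i^T d'\ge 0$ directly. So $(\lambda,0)^T d'=\rev\ge 0$, not $\le 0$, as it must be at a KKT point of $\min_{\s}\lambda^T x'$.
\item You only need the tangent cone to be contained in the linearized cone, and that holds with no constraint qualification.
\end{itemize}

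The paper disposes of Step 3 with the bare assertion that GSS gives $\delta_x=-x\in\mathcal{F}_x$. Under LSS or GSS as stated, the theorem is false. Take $N=N_y=1$, $d=0$, $c(q)=q$, $\ql=0$, $\qu=2$, $h(x,y)=y^2-y$ and $g(x,y)=y-x$. Then $\s=\{(x,0):x\ge 0\}\cup\{(x,1):x\ge 1\}$, so $\s_x=[0,\infty)$ is GSS. At $(q,x,y)=(1,1,1)$:
\begin{itemize}
\item The active constraint gradients in $(q,x,y)$ are $(-1,1,0)$, $(0,0,1)$ and $(0,-1,1)$. They are linearly independent, so MFCQ holds.
\item The point is a local minimum, since feasible points near it have $y=1$ and hence $q=x\ge 1$.
\item The unique multipliers are $\lambda=\nu_i=1$, $\nu_e=-1$, $\mul=\muu=0$, giving $\rev=-1<0$.
\end{itemize}
The points $sx$ with $s<1$ are attainable only with $y=0$. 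The nearest point of $\s$ to $(sx,y)$ is $(1,1)$ itself, so your metric-regularity limit is the zero vector. In your notation this means $e=x$, and no argument can remove it.

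The proof can therefore only be completed by promoting your fallback to a hypothesis: require $y_s$ with $(sx,y_s)\in\s$, $y_1=y$ and $\|y_s-y\|=O(1-s)$ as $s\uparrow 1$. Mere continuity does not suffice for your route. If $\|y_s-y\|/(1-s)\to\infty$, the normalized difference quotients have vanishing $x$-component and give no tangent vector of the form $(-x,w)$. With the bound, any limit point $w$ of $(y_s-y)/(1-s)$ gives $(-x,w)\in T_{\s}(x,y)$, and your Step 2 finishes the proof. The GSS case needs the same strengthening at the optimum, since the example shows that GSS of $\s_x$ alone does not provide it.

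This strengthened condition is what the later sections actually verify. The paths $(s\boldsymbol{\theta},s\boldsymbol{p}^l)$, $(\pi^s,\alpha^s,\phi^s)$ and $(\sqrt{s}\,\boldsymbol{v},\boldsymbol{\theta},s\boldsymbol{p}^l,s\boldsymbol{q}^l)$ are smooth in $s$ near $1$ and equal the original $y$ at $s=1$. Those applications survive, but the general theorem needs to be restated.
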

\begin{proof} 
The Lagrangian of problem \eqref{eq:min_cost_formulation} is given by
\begin{align} 
    \Lag & = c(q) +  \mul^T (\ql - q)   + \muu^T(q - \qu) + \lambda^T(x-q+d) \nonumber \\ & \qquad + \nu_i^T G(x,y) + \nu_e^T H(x,y). \label{eq:thm1_lagrangian}
\end{align}

\noindent As a part of the KKT conditions at the optimum, we get
\begin{subequations}
\begin{align}
    \nabla_x \Lag &= \lambda^T + \nu_i^T \nabla_x G + \nu_e^T \nabla_x H = 0    \label{eq:partial_x} \\
    \nabla_y \Lag &= \nu_i^T \nabla_y G + \nu_e^T \nabla_y H = 0.   \label{eq:partial_y}                
\end{align}
\end{subequations}

\noindent Let $\mathcal{F}$ be the set of all feasible directions $(\delta_x, \delta_y)$ at the current point, and let $\mathcal{F}_x = \{\delta_x \mid \exists (\delta_x,\delta_y) \in \mathcal{F}\}$ denote its $x$-projection.
\noindent A feasible direction $(\delta_x, \delta_y) \in \mathcal{F}$  must satisfy 
\begin{subequations}
\begin{align}
    \nu_i^T \left(\nabla_x G \delta_x + \nabla_yG \delta_y\right) &\leq 0,  \label{eq:inequality_feasible_ray} \\
    \nabla_x H \delta_x + \nabla_y H \delta_y &= 0. \label{eq:equality_feasible_ray}
\end{align}
\end{subequations}

\noindent If Condition~\ref{con:global_condition} holds (GSS), we must have $\delta_x  = -x \in \mathcal{F}_x$. Therefore there exists $\delta_y$ such that $(\delta_x,\delta_x) = (-x,\delta_y) \in \mathcal{F}$. Using this fact, we bound the total revenue as
\begin{align}
    \rev = -\lambda^T x &\stackrel{(a)}{=} \left(\nu_i^T \nabla_x G + \nu_e^T \nabla_x H\right)x \nonumber\\
    &\stackrel{(b)}{=} -\left(\nu_i^T \nabla_x G + \nu_e^T \nabla_x H\right)\delta_x \nonumber\\
    &\stackrel{(c)}{=} -\nu_i^T \nabla_x G \delta_x + \nu_e^T \nabla_y H \delta_y \nonumber\\
    &\stackrel{(d)}{=} -\nu_i^T \nabla_x G \delta_x  - \nu_i^T \nabla_y G \delta_y \geq 0, \label{eq:thm1_rev_ad}
\end{align}
where $(a)$ follows from \eqref{eq:partial_x}, $(b)$ holds because $\delta_x = -x$, $(c)$ follows by using \eqref{eq:equality_feasible_ray} and $(d)$ follows from \eqref{eq:partial_y}.  The inequality follows from the MFCQ in Condition \ref{con:cq_condition} with $\nu_i\geq 0$, as required by dual feasibility.  If Condition~\ref{con:local_condition} holds (LSS), we have $\delta_x  = -\epsilon x \in \mathcal{F}_x$ for $\epsilon\in(0,1)$.  By steps (a)-(d) in eq. \eqref{eq:thm1_rev_ad}, $\!-\lambda^T(\epsilon x) \geq 0$.  By linearity, $\!\mathcal R = -\lambda^Tx \geq 0$ also.
\end{proof}

Observe that the criteria for revenue adequacy in Theorem \ref{thm:revenue_adequacy_general} are related to the GSS property of the constraint set $\s$ as well as the KKT conditions and MFCQ, without dependence on the market mechanism $\m$, the form of the cost function $c(q)$, or any pre-existing commodity allocations $d$. 

\section{Global Property for DC OPF and OGF}  \label{sec:global_star_sets}

We verify the GSS set property for the constraints that define the DC OPF and the steady-state OGF problems. 
Both types of problems involve physical laws that relate the potential difference between two network nodes to the flow on the connecting edge. These physical laws are homogeneous and the potentials are translation invariant, and these two properties imply that these systems satisfy the GSS property.

\vspace{-1ex}
\subsection{DC Optimal Power Flow} \label{sec:dcopf}

Wholesale electricity markets coordinate power transmission and generation facilities using contract network pricing with congestion payments as rental fees for use of capacity rights \cite{hogan1992contract}.  When an FTR holder uses the capacity, the payment of the full rental fee ensures that the marginal opportunity cost of transmission is in fact the actual cost, but average net costs of transmission use are determined by marginal losses only.  If the right holder does not use the capacity, then optimal dispatch is assumed to ensure that the rental payment is at least enough that the net cost to purchase power at the destination is no more than the cost of owning generation and transmission  \cite{caramanis1982optimal}.  The ISO acts as an intermediary, collecting congestion payments from system users and distributing congestion rents to FTR holders.  Revenue adequacy for transmission pricing answers whether total congestion payments in the contract network can cover rental obligations.

The DC optimal power flow (DC OPF) problem is an optimization problem routinely used for operational planning and market clearing in power systems. We suppose that a power system is represented by a set of nodes $\mathcal{N}_p$ connected by lines $\mathcal{E}_p$, with generators $\mathcal{G}_p$. The equations below specify the feasible market and system sets of the DC OPF problem:
\begin{subequations}    \label{eq:dcopf_feasible_set}
\begin{align}
  \m_{DCP} = \bigg\{\quad (\boldsymbol{p}^g,\boldsymbol{p})\mid \quad & \nonumber \\
  \forall i \in \mathcal{N}_p: \quad & p_i = p^g_i - d_i \quad  \label{eq:dcopf_net_injection} \\
  \forall i \in \mathcal{N}_p: \quad & 0\leq p^g_i \leq \overline{p}^g_i \quad  \label{eq:dcopf_gen_lims} \quad \bigg\}, \\
  \s_{DCP} = \bigg\{\quad (\boldsymbol{p},\boldsymbol{\theta},\boldsymbol{p}^l)\mid \quad & \nonumber \\
  \forall i \in \mathcal{N}_p: \quad &\sum_{j \in \partial i} p_{ij}^l = p_i,  \label{eq:dcopf_nodal_conservation}\\
  \forall (ij) \in \mathcal{E}_p: \quad & \theta_i - \theta_j = Z_{ij} p_{ij}^l, \label{eq:dcopf_ohms_law} \\
  \forall (ij) \in \mathcal{E}_p: \quad & -\overline{p}_{ij} \leq p_{ij}^l \leq \overline{p}_{ij} \label{eq:dcopf_branch_flow_limit} \quad \bigg\}.
\end{align}
\end{subequations}
In the above equations, the production of a generator $g\in\mathcal{G}_p$ at a node $i\in\mathcal{N}_p$ is the variable $p_i^g$ with upper bound value $\overline{p}^g_i$, the load at node $i\in\mathcal{N}_p$ is $d_i$, the net power injection at a node $i\in\mathcal{N}_p$ is $p_i$, and the voltage phase angle at a node $i\in\mathcal{N}_p$ is $\theta_i$.  The phase angle difference on a line $(i,j)\in\mathcal{E}_p$ is $\theta_{ij}=\theta_j-\theta_i$.   The variables $p_{ij}^l$ denote power flow on a line $(i,j)\in\mathcal{E}_p$, with line conductance parameters $Z_{ij}$, and $\overline{p}_{ij}$ defines the bound value for the limits on power flow magnitude for each line. We use a shorthand where $\boldsymbol{\theta}$ and $\boldsymbol{p}^l$ are vectors containing $\theta_{ij}$ and $p_{ij}^l$ for all $(i,j)\in\mathcal{E}_p$, respectively, $\boldsymbol{p}$ is a vector containing net nodal power injections $p_i$ for all $i\in\mathcal{N}_p$, and $\boldsymbol{p}^g$ is a vector of generator production levels $p^g_i$ for all $g\in\mathcal{G}_p$.  In equation \eqref{eq:dcopf_nodal_conservation}, $\partial i \subset \mathcal{N}_p$ denotes the set of neighboring nodes to $i\in\mathcal{N}_p$, and we suppose that $p_{ji}^l=-p_{ij}^l$, so that the summation denotes the total power flowing from node $i$ into the network and balancing the net power injection $p_i$ into that node.  The DC OPF uses the so-called DC-approximation, which is a linear approximation of the non-linear power flow physics.  The parameters that are controllable by the operator are the generator production levels, which must in aggregate equal the total system load.  The DC OPF problem minimizes the cost of generation while meeting the demand and the power flow limits, and using a linear cost function with production costs $c_i$ for each generator $i\in\mathcal{G}_p$ leads to the problem \vspace{-2.5ex}
\begin{subequations}
\begin{align}   \label{eq:dcopf}
\min \quad &  \sum_{i\in\mathcal{G}_p} c_i p^g_i \,\, \\\mbox{s.t.}  \quad &   (\boldsymbol{p}^g,\boldsymbol{p}) \in \m_{DCP}, \\ \quad  &  (\boldsymbol{p},\boldsymbol{\theta},\boldsymbol{p}^l) \in \s_{DCP}.
\end{align}
\end{subequations}
The DC OPF problem \eqref{eq:dcopf} is a special case of the general framework in problem \eqref{eq:min_cost_formulation} with the variable correspondence
\begin{align*}
    q \rightarrow \boldsymbol{p}^g, \quad x \rightarrow \boldsymbol{p}, \quad y \rightarrow (\boldsymbol{\theta},\boldsymbol{p}^l)
\end{align*}
and where the constraints 
\eqref{eq:dcopf_nodal_conservation} and  \eqref{eq:dcopf_ohms_law} correspond to the equality constraints $H(x,y)=0$ in \eqref{eq:H_fun_def}, and the constraints \eqref{eq:dcopf_branch_flow_limit} 
correspond to the inequality constraints $G(x,y)\leq0$ in \eqref{eq:G_fun_def}.  The following theorem is a straightforward application of Condition \ref{con:global_condition} and Theorem \ref{thm:revenue_adequacy_general}.

\begin{theorem}  \label{thm:dcopf_star}
The feasible set $\s_{DCP}$ of the DC-OPF satisfies the GSS property. Therefore, any optimal solution to \eqref{eq:dcopf} is revenue adequate.
\end{theorem}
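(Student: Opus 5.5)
The argument has two parts: first show that the projection of $\s_{DCP}$ onto $\boldsymbol{p}$ is globally star shaped, then transfer this to revenue adequacy through Theorem~\ref{thm:revenue_adequacy_general}.

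For the first part, we use the fact that every constraint defining $\s_{DCP}$ is linear and homogeneous in $(\boldsymbol{p},\boldsymbol{\theta},\boldsymbol{p}^l)$, apart from the box bounds $-\overline{p}_{ij}\le p^l_{ij}\le \overline{p}_{ij}$. These box bounds describe a symmetric interval containing zero.

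Fix any $\boldsymbol{p}$ in the projection $(\s_{DCP})_{\boldsymbol{p}}$, with witness $(\boldsymbol{\theta},\boldsymbol{p}^l)$, and fix $s\in[0,1]$. We claim that $(s\boldsymbol{p},s\boldsymbol{\theta},s\boldsymbol{p}^l)\in\s_{DCP}$, and check each constraint in turn:
\begin{itemize}
\item Nodal conservation \eqref{eq:dcopf_nodal_conservation} is linear in $(\boldsymbol{p},\boldsymbol{p}^l)$ with no constant term, so multiplying $\sum_{j\in\partial i}p^l_{ij}=p_i$ by $s$ preserves it.
\item Ohm's law \eqref{eq:dcopf_ohms_law} is linear in $(\boldsymbol{\theta},\boldsymbol{p}^l)$ with no constant term, so it is preserved in the same way. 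The antisymmetry $p^l_{ji}=-p^l_{ij}$ is also preserved under scaling.
\item For the flow limits \eqref{eq:dcopf_branch_flow_limit}, $|s\,p^l_{ij}|=s|p^l_{ij}|\le|p^l_{ij}|\le\overline{p}_{ij}$ because $0\le s\le 1$.
\end{itemize}
Hence $s\boldsymbol{p}\in(\s_{DCP})_{\boldsymbol{p}}$ for all $s\in[0,1]$, which is exactly Condition~\ref{con:global_condition}. Translation invariance of the angles is not even needed here, since scaling $\boldsymbol{\theta}$ directly works.

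For the second part, we observe that \eqref{eq:dcopf} is an instance of \eqref{eq:min_cost_formulation} under the stated correspondence $q\to\boldsymbol{p}^g$, $x\to\boldsymbol{p}$, $y\to(\boldsymbol{\theta},\boldsymbol{p}^l)$. The only adaptation is that the nomination bounds become $\ql=0$ and $\qu=\overline{\boldsymbol{p}}^g$, with $q$ restricted to generator entries or padded with zero bounds at non-generator nodes. Theorem~\ref{thm:revenue_adequacy_general} then gives $\rev=-\lambda^T\boldsymbol{p}\ge 0$ for the LMPs $\lambda$ at any optimal primal-dual pair.

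The main obstacle is the hypothesis of Condition~\ref{con:cq_condition} that Theorem~\ref{thm:revenue_adequacy_general} requires. For the DC OPF we would sidestep it rather than verify MFCQ directly, because it can fail, for instance through redundant conservation equations: the equations \eqref{eq:dcopf_nodal_conservation} sum to $\sum_i p_i=0$, so their gradients are dependent. The workaround is that all constraints are affine and the objective is linear. For linear programs, KKT multipliers exist at every optimum without any constraint qualification. Inspecting the proof of Theorem~\ref{thm:revenue_adequacy_general}, MFCQ is used only to guarantee multipliers. The inequality in step (d) really uses only $\nu_i\ge0$ and the fact that the feasible direction $(-\boldsymbol{p},-\boldsymbol{\theta},-\boldsymbol{p}^l)$ satisfies $\nabla G\,\delta\le 0$ on the active constraints, together with complementary slackness. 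By linearity of $G$, the direction from the current point toward $s=0$ strictly decreases each active $|p^l_{ij}|$, or leaves the inactive ones feasible. So steps (a)--(d) go through verbatim for any optimal LP primal-dual pair, and revenue adequacy follows for every optimal solution of \eqref{eq:dcopf}.
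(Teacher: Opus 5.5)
Your GSS argument is the same as the paper's. You scale the whole triple $(\boldsymbol{p},\boldsymbol{\theta},\boldsymbol{p}^l)$ by $s\in[0,1]$, note that the equalities are homogeneous linear and the line limits form a symmetric interval around zero, and then invoke Theorem~\ref{thm:revenue_adequacy_general}.

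You go beyond the paper on the constraint qualification. The paper simply cites Theorem~\ref{thm:revenue_adequacy_general}, even though the statement of Theorem~\ref{thm:dcopf_star} drops Condition~\ref{con:cq_condition}. You instead observe two things: for an LP, multipliers exist at every optimum; and step (d) needs only $\nu_i\ge 0$, complementary slackness, and $\nabla g_i^T\delta\le 0$ on active constraints along the segment to the origin. This is valid and closes a gap the paper leaves open. It also correctly gives $\rev\ge 0$, rather than the strict $\rev>0$ the paper writes.

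One correction to your motivation: the conservation equations do not have dependent gradients. Write $h_i=\sum_{j\in\partial i}p^l_{ij}-p_i$. Each gradient contains $-e_i$ in the $\boldsymbol{p}$-block, and the flow terms cancel in pairs, so $\sum_i\nabla h_i=(-\mathbf{1},0,0)\neq 0$. The relation $\sum_i p_i=0$ holds at feasible points but is not a linear dependence among the constraint gradients.

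Condition~\ref{con:cq_condition} genuinely fails in other ways:
\begin{itemize}
\item As literally stated, it demands $\nabla g_i^T\delta<0$ for \emph{every} inequality. This is impossible for the pair $\pm p^l_{ij}\le\overline{p}_{ij}$, whose gradients are $\pm e_{ij}$.
\item A CQ for the full problem can fail through the nomination bounds. One example is your own padded constraints $0\le p^g_i\le 0$ at non-generator nodes. Another is $\sum_i\overline{p}^g_i=\sum_i d_i$: this pins every generator at its cap, while the linearized balance forces $\sum_i\delta p^g_i=0$.
\end{itemize}
None of this affects your proof, since the LP argument does not need MFCQ at all.
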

\!\!\!\!\!\!\!\!\!\!\! \begin{proof}
The GSS property for $\s_{DCP}$ is verified from linearity of the inequalities defining the set, i.e., if $(\boldsymbol{p},\boldsymbol{\theta},\boldsymbol{p}^l) \in \s_{DCP}$, then $(s\boldsymbol{p},s\boldsymbol{\theta},s\boldsymbol{p}^l) \in \s_{DCP}$.  By Theorem \ref{thm:revenue_adequacy_general}, $\mathcal R >0$.
\end{proof}

Linear dependence of line flows on bus loads in the DC approximation leads congestion payments by network users to exactly equal capacity rental payments to FTR holders \cite{hogan1992contract}, so that revenue adequacy is inherent.

\vspace{-.5ex}
\subsection{Steady-State Optimal Gas Flow} \label{sec:ssogf}

Natural gas is transported from suppliers to consumers through pipeline networks with potentially complex topology. Energy dissipation caused by turbulent gas flow results in decreasing pressure in the flow direction, so flow is actuated by gas compressors that boost pressure.  In steady-state, it is standard to model gas flow using the Weymouth equation that relates the pressures at the ends of a pipe to the mass flow through the pipe \cite{rios2015optimization}.   Economic gas network flow optimization solutions initially used linear edge capacity models \cite{brooks1981using} or linear approximations of steady-state gas flow equations \cite{oneill1979mathematical}.  Nonlinear optimization has also been used to evaluate the capacity of gas pipeline networks \cite{koch2015evaluating}.  

The steady-state optimal gas flow (OGF) problem was developed to maximize the economic utility for users of a gas pipeline network by maximizing the utilization of its capacity to transport gas from suppliers to consumers subject to nonlinear physics and engineering constraints \cite{rudkevich2017hicss}.  The controllable parameters of the OGF are any loads that can be curtailed, as well as the settings of gas compressors \cite{wu2017adaptive}. We suppose that a gas pipeline network is represented by a set of junctions $\mathcal{N}_g$ with elements $i$ connected by oriented edges in the set $\mathcal{E}_g$, with elements $(i,j)$ that denote pipes.  There is a subset $\mathcal{C}_g\subset\mathcal{E}_g$ of pipes that have a compressor located at the pipe inlet, which boosts the pressure at the pipe inlet with respect to that at the sending node.  The present formulation is limited to consider at most a single variable supply or withdrawal of gas at a junction.  The equations below specify the feasible market and system sets for the OGF problem: \vspace{-1ex}
\begin{subequations}    \label{eq:gas_ss_feasible_set}
\begin{align}
  \m_{OGF} = \bigg\{\,\, (\boldsymbol{q},\boldsymbol{x}) \mid \quad & \nonumber \\
  \forall i \in \mathcal{N}_g: \quad & x_i = q_i - d_i, \label{eq:net_nodal_flow} \\
  \forall i \in \mathcal{N}_g:   \quad &   \underline{q}_i \leq q_i \leq \overline{q}_i  \quad  \bigg\}  \label{eq:gas_trading_bounds} \\
  \vspace{-2.5ex}
  \s_{OGF} = \bigg\{\,\, (\boldsymbol{x}, \boldsymbol{\pi}, \boldsymbol{\phi}, \boldsymbol{\alpha}) \mid \quad & \nonumber \\
  \forall i \in \mathcal{N}_g: \quad & \sum_{j \in \partial_i} \phi_{ij} = x_i \quad  \label{eq:gas_flow_conservation}\\
   \forall (ij) \in \mathcal{E}_g: \quad  &\pi_{ij} - \pi_{ji}  = \beta_{ij} \phi_{ij}|\phi_{ij}| \label{eq:gas_weymouth}\\
   \forall (ij) \in \mathcal{C}_g: \quad  & \pi_i \alpha_{ij} = \pi_{ij},  \, \pi_j = \pi_{ji},  \label{eq:multiplicative_compression} \\
   \forall (ij) \in \mathcal{E}_g\setminus\mathcal{C}_g: \quad  & \pi_i = \pi_{ij},   \, \pi_j = \pi_{ji},\label{eq:no_compression} \\
  \forall (ij) \in \mathcal{E}_g: \quad  & \underline{\pi}_{ij} \leq \pi_{ij} \leq \overline{\pi}_{ij} \label{eq:gas_pressure_bounds_pipes} \\
   \forall (ij) \in \mathcal{E}_g: \quad & 1  \leq \alpha_{ij}  \leq \overline{\alpha}_{ij}  \quad \label{eq:compression_bounds} \\
  \forall i \in \mathcal{N}_g: \quad & \underline{\pi}_i \leq \pi_i \leq \overline{\pi}_i  \label{eq:gas_pressure_bounds_nodal}  \quad  \bigg\}  \vspace{-.5ex}
\end{align}
\end{subequations}
In the above equations, the fixed outflow and optimized inflow of gas from/to the network at a node $i\in\mathcal{N}_g$ is denoted by the variables $d_i$ and $q_i$, respectively, where the latter is positive and is constrained with lower and upper bound values of $\underline{q}_i$ and $\overline{q}_i$, respectively.  The variable $x_i$ then denotes the total inflow of gas into the network at a node $i\in\mathcal{N}_g$.  Gas flow physical state is described by the mass flow $\phi_{ij}$ through each pipe and the square of gas pressure at the start and end of the pipe, denoted by $\pi_{ij}$ and $\pi_{ji}$, respectively, for $(i,j)\in\mathcal{E}_g$.  In the Weymouth equation \eqref{eq:gas_weymouth}, 
the parameter $\beta_{ij}$ is called the \emph{resistance} given for each pipe as $\beta=a^2\lambda L/ (A^2 D)$, where $a$ denotes the local wave speed in the gas, $\lambda$ is the Darcy-Weisbach friction factor, and $L$, $D$, and $A$ denote the length, diameter, and cross-sectional area of the pipe, respectively. In this study, we suppose that gas composition is homogeneous throughout the network, so that $a$ is constant and uniform, while each pipe $(i,j)\in\mathcal{E}_g$ may have various parameters $\lambda_{ij}$, $L_{ij}$, $D_{ij}$, and $A_{ij}$. An auxiliary variable $\pi_i$ is used to represent squared nodal pressure, which is related to the pipe inlet pressure by a ratio $\alpha_{ij}$ that is an optimized variable with upper bound $\overline{\alpha}_{ij}>1$ if a compressor is located at node $i$ and powering flow into pipe $(i,j)\in\mathcal{C}_g$, and $\alpha_{ij}=\overline{\alpha}_{ij}=1$ otherwise.   Gas pressure is constrained in each pipe $(i,j)\in\mathcal{E}_g$, with lower and upper bound values $\underline{\pi}_{ij}$ and $\overline{\pi}_{ij}$, and at each node $i\in\mathcal{N}_g$, with lower and upper bound values $\underline{\pi}_{i}$ and $\overline{\pi}_{i}$. We use a shorthand where $\boldsymbol{q}$, $\boldsymbol{x}$, and $\boldsymbol{\pi}$ are vectors that contain the nodal values $q_i$, $x_i$, and $\pi_i$, respectively, for all $i\in\mathcal{N}_g$, $\boldsymbol{x}$ is a vector that contain $\phi_{ij}$ for all $(i,j)\in\mathcal{E}_g$, and $\boldsymbol{\alpha}$ is a vector that contains $\alpha_{ij}$ for all $(i,j)\in\mathcal{C}_g$. In equation \eqref{eq:gas_flow_conservation}, $\partial i \subset \mathcal{N}_g$ denotes the set of neighboring nodes to $i$, and we suppose that $\phi_{ji}=-\phi_{ij}$, so that the summation denotes the total gas flowing from node $i$ into the network and balancing the net gas injection $x_i$ into that node.  

The OGF problem maximizes economic value generated by the system expressed as receipts from consumers minus the cost of supply (i.e., minimizing the cost of inflows), as well as minimizing the cost of  gas compression to operate the pipeline. Using a linear cost with prices $c_i$ for suppliers or consumers at  nodes $i\in\mathcal{N}_g$, and price functions $c_{ij}$ for the cost to compress gas at the start of pipe $(i,j)$ with ratio $\alpha_{ij}$, leads to the formulation
\begin{subequations}    \label{eq:ss_ogf}
\begin{align}
    \min \quad & \sum_{i \in \mathcal{N}_g} c_iq_i + \sum_{(i,j) \in \mathcal{E}_g} c_{ij}(\alpha_{ij}) \\
    \mbox{s.t.} \quad  & (\boldsymbol{q},\boldsymbol{x}) \in  \m_{OGF}, \\ & (\boldsymbol{x},\boldsymbol{\phi},\boldsymbol{\pi}, \boldsymbol{\alpha}) \in  \s_{OGF}.
\end{align}
\end{subequations}
The OGF can be expressed using the general formulation \eqref{eq:min_cost_formulation} with the variable correspondence
\begin{align*}
    q \rightarrow \boldsymbol{q}, \quad x \rightarrow \boldsymbol{x}, \quad y \rightarrow (\boldsymbol{\pi}, \boldsymbol{\phi}, \boldsymbol{\alpha}), 
\end{align*}
and where constraints 
\eqref{eq:gas_flow_conservation}, \eqref{eq:gas_weymouth}, and \eqref{eq:multiplicative_compression} correspond to the equality constraints $H(x,y)=0$ in \eqref{eq:H_fun_def}, and the constraints 
\eqref{eq:gas_pressure_bounds_pipes}, \eqref{eq:compression_bounds}, and \eqref{eq:gas_pressure_bounds_nodal}
correspond to the inequality constraints $G(x,y)\leq0$ in equation \eqref{eq:G_fun_def}.  Establishing the GSS property for the set $\s_{OGF}$ requires the following assumption.

\begin{assumption}  \label{as:common_pressure_point}
$(\mathcal{N}_g,\mathcal{E}_g)$ is a connected graph and the squared pressure $\pi_c>0$ is allowed at each node $i\in\mathcal{N}_g$:
\begin{align}
    \pi_c \in[\underline{\pi}_i,\overline{\pi}_i] \quad \forall i \in \mathcal{N}_g.
\end{align}
\end{assumption}
\begin{theorem}  \label{thm:gas_ss_star}
If Assumption~\ref{as:common_pressure_point} holds, then the OGF feasible set $\s_{OGF}$ is GSS. Therefore, any locally optimal solution to problem \eqref{eq:ss_ogf} satisfying Condition~\ref{con:cq_condition} is revenue adequate.
\end{theorem}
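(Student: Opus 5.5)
Given $(\boldsymbol{x},\boldsymbol{\pi},\boldsymbol{\phi},\boldsymbol{\alpha})\in\s_{OGF}$ and $s\in[0,1]$, we will build an explicit feasible point whose injection vector is $s\boldsymbol{x}$; this gives the GSS property of Condition~\ref{con:global_condition}. Revenue adequacy then follows from Lemma~\ref{eq:condition2_to_1} and Theorem~\ref{thm:revenue_adequacy_general}, since the MFCQ of Condition~\ref{con:cq_condition} is assumed.

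The construction takes two steps, scaling flows and then interpolating pressures.
\begin{enumerate}
\item Set the flows to $\phi'_{ij}=s\phi_{ij}$. Conservation \eqref{eq:gas_flow_conservation} is linear, so these flows balance the injections $s\boldsymbol{x}$.
\item Weymouth \eqref{eq:gas_weymouth} is homogeneous of degree two, since $\beta_{ij}(s\phi_{ij})|s\phi_{ij}|=s^2\beta_{ij}\phi_{ij}|\phi_{ij}|$. So we need squared pressures whose drops along every pipe are $s^2$ times the original drops.
\item Use the translation invariance of the potentials and define $\pi'_i=s^2\pi_i+(1-s^2)\pi_c$, with $\pi_c$ from Assumption~\ref{as:common_pressure_point}.
\end{enumerate}
Uncompressed pipes are then straightforward. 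We set $\pi'_{ij}=\pi'_i$ and $\pi'_{ji}=\pi'_j$, so the drop is $\pi'_i-\pi'_j=s^2(\pi_i-\pi_j)$, and \eqref{eq:gas_weymouth} holds. Each nodal bound \eqref{eq:gas_pressure_bounds_nodal} holds because $\pi'_i$ is a convex combination of $\pi_i$ and $\pi_c$, both of which lie in the convex interval $[\underline{\pi}_i,\overline{\pi}_i]$.

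The main obstacle is compressed pipes $(i,j)\in\mathcal{C}_g$. Here we need both $\pi'_{ij}-\pi'_{ji}=s^2(\pi_{ij}-\pi_{ji})$ and $\pi'_{ij}=\alpha'_{ij}\pi'_i$ with $1\le\alpha'_{ij}\le\overline{\alpha}_{ij}$.

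The natural choice is $\pi'_{ij}=s^2\pi_{ij}+(1-s^2)\pi_c$ and $\pi'_{ji}=\pi'_j$. This gives the correct drop automatically. The required ratio is
\[
\alpha'_{ij}=\frac{s^2\alpha_{ij}\pi_i+(1-s^2)\pi_c}{s^2\pi_i+(1-s^2)\pi_c}.
\]
This is a weighted average of $\alpha_{ij}$ and $1$ with nonnegative weights $s^2\pi_i$ and $(1-s^2)\pi_c$, using $\pi_c>0$ and $\pi_i\ge 0$. Hence $\alpha'_{ij}\in[1,\alpha_{ij}]\subseteq[1,\overline{\alpha}_{ij}]$, so \eqref{eq:compression_bounds} holds.

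The pipe bounds \eqref{eq:gas_pressure_bounds_pipes} on $\pi'_{ij}$ hold if $\pi_c\in[\underline{\pi}_{ij},\overline{\pi}_{ij}]$. Assumption~\ref{as:common_pressure_point} only states the nodal condition. I would read it as also covering pipe-end pressures, or argue from feasibility that the pipe bounds contain the nodal value $\pi_c$; otherwise a mild strengthening of the assumption would be stated.

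Finally, the endpoint $s=0$ needs a check. There $\boldsymbol{\phi}'=0$, every squared pressure equals $\pi_c$, and $\alpha'=1$, so the point is feasible; this is exactly where connectivity and Assumption~\ref{as:common_pressure_point} enter. This completes the GSS verification, and the theorem follows by the general result.
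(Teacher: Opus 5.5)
Your proposal is correct and follows the paper's proof. It uses the same scaling $\phi^s=s\phi$, the same interpolation $\pi_c+s^2(\pi-\pi_c)$ of nodal and pipe-inlet squared pressures with $\alpha^s$ taken as their ratio, the same constraint-by-constraint check, and the same appeal to Theorem~\ref{thm:revenue_adequacy_general}.

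Two of your details are sharper than the paper's:
\begin{enumerate}
\item Your weighted-average form of $\alpha'_{ij}$, with weights $s^2\pi_i$ and $(1-s^2)\pi_c$, is the correct version of the paper's displayed ratio, whose numerator has $\pi_c$ where $\pi_i$ belongs.
\item Your caveat on \eqref{eq:gas_pressure_bounds_pipes} is a genuine gap, which the paper passes over with ``a similar argument holds.'' It cannot be closed from feasibility. Take one uncompressed pipe $(i,j)$ with inlet bound $[6,10]$, nodal bounds $[1,10]$ at $i$ and $[1,5]$ at $j$, and $\pi_c=5$. This admits the flowing point $\pi_i=8$, $\pi_j=4$, yet zero flow would force $\pi_i=\pi_j$, so $x=0$ is infeasible and GSS fails. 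Assumption~\ref{as:common_pressure_point} should therefore be strengthened to require $\pi_c\in[\underline{\pi}_{ij},\overline{\pi}_{ij}]$ as well.
\end{enumerate}
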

\begin{proof} 
Let $(\boldsymbol{x},\boldsymbol{\phi},\boldsymbol{\pi},\boldsymbol{\alpha}) \in \s_{OGF}$ be a given feasible point. To establish the theorem, we must show that for any $s \in [0,1]$, there exists $(\phi^s,\pi^s, \alpha^s)$ such that the point $(sx,\phi^s,\pi^s, \alpha^s) \in \s_{OGF}$. Consider the quantities $(\phi^s,\pi^s, \alpha^s)$ defined as a function of $s$ given below:
\begin{subequations}
\begin{align}
    \pi_i^s &= \pi_c + s^2(\pi_i-\pi_c) \quad \forall i \in \mathcal{N}_g, \label{eq:pi_interpol1}\\
    \pi_{ij}^s &= \pi_c + s^2(\pi_{ij}-\pi_c) \quad \forall (ij) \in \mathcal{E}_g, \label{eq:pi_interpol2}\\
    \alpha_{ij}^s &= \frac{\pi_c + s^2(\pi_{ij}-\pi_c)}{\pi_c + s^2(\pi_i-\pi_c)} \quad \forall (ij) \in \mathcal{C}_g. \label{eq:alpha_interpol} \\
    \phi_{ij}^s &= s\phi_{ij} \quad \forall (ij) \in \mathcal{E}_g. \label{eq:phi_interpol}
\end{align}
We show $(sx,\phi^s,\pi^s, \alpha^s) \in \s_{OGF}$.
Equation \eqref{eq:gas_flow_conservation} holds since \vspace{-1ex}
\begin{equation} \label{eq:ssg_flowbal}
    \sum_{j \in \partial i}\phi_{ij}^s = s\sum_{j \in \partial i}\phi_{ij} = s x, \quad \forall \, i \in \mathcal{V}.
\end{equation}
To verify the property for equation \eqref{eq:gas_weymouth}, observe that
\begin{align}
    \pi_{ij}^s - \pi_{ji}^s & = [\pi_c + s^2(\pi_{ij}-\pi_c)] - [\pi_c + s^2(\pi_{ji}-\pi_c)] \nonumber \\
    &  = s^2(\pi_{ij}-\pi_{ji}) \nonumber \\ 
    &  = \phi_{ij}^s|\phi_{ij}^s|.
\end{align}
The compression equation \eqref{eq:multiplicative_compression} holds by the definition of $\alpha_{ij}^s$ in equation \eqref{eq:alpha_interpol}. 
Because $\pi_c,\pi_i \in [\underline{\pi}_i,\overline{\pi}_i]$, we have $\pi_i^s = (1-s^2)\pi_c + s^2 \pi_i \in [\underline{\pi}_i,\overline{\pi}_i]$. A similar argument holds for pipeline pressures thus verifying equations
\eqref{eq:gas_pressure_bounds_nodal} and \eqref{eq:gas_pressure_bounds_pipes}.
To verify the compression ratio bounds, we first observe that if $\alpha_{ij} = 1$ then $\alpha_{ij}^s = 1$ for all $s \in [0,1]$, which trivially satisfies equation \eqref{eq:compression_bounds}. If $\alpha_{ij} \neq 1$, then we compute the ratio \vspace{-2ex}
\begin{align} 
    \frac{\alpha_{ij}^s-1}{\alpha_{ij}-1} = \frac{s^2 \pi_c}{(1-s^2)\pi_c+s^2\pi_i} \in (0,1).
\end{align}
Therefore, $\alpha_{ij} \in [1,\overline{\alpha}_{ij}]$ implies that $\alpha_{ij}^s \in [1,\overline{\alpha}_{ij}]$. Thus $\s_{OGF}$ of the OGF is GSS, and by Theorem \ref{thm:revenue_adequacy_general}, $\mathcal R >0$.
\end{subequations}
\end{proof}
Because OGF constraints $\s_{OGF}$ are GSS, $x=0$ (zero flow) is feasible, and numerical methods that maximize revenue for problem \eqref{eq:ss_ogf} will choose $\mathcal R=0$ rather than a revenue-negative solution.  The reader can verify this using an OGF code \cite{baker2026bling}.

\section{Local Property for AC Optimal Power Flow}  \label{sec:acopf}

The AC OPF is the analog of the DC OPF introduced in Section \ref{sec:dcopf}, with the original, highly accurate non-linear AC power flow equations used instead of the linear DC approximation.  The feasible sets of the AC OPF are \vspace{-1ex}
\begin{subequations}    \label{eq:acopf_feasible_set}
\begin{align}
  \m_{ACP} = \big\{\,\, (\boldsymbol{p}^g,\boldsymbol{q}^g, \boldsymbol{p},\boldsymbol{q}) \mid \quad & \nonumber \\ 
  \forall i \in \mathcal{N}_p: \quad & p_i = p^g_i-p^d_i, \label{eq:total_nodal_injection_p} \\ \forall i \in \mathcal{N}_p: \quad & q_i = q^g_i-q^d_i \label{eq:total_nodal_injection_q} 
  \\  
  \forall i \in \mathcal{N}_p: \quad & 0 \leq p^g_i \leq \overline{p}^g_i, \label{eq:nodal_injection_limits_p} \\ \forall i \in \mathcal{N}_p: \quad &  0 \leq q^g_i \leq \overline{q}^g_i \label{eq:nodal_injection_limits_q} \quad  \big\}
\end{align}
\begin{align}
\s_{ACP}  \!=\! \big\{\!(\boldsymbol{p},\!\boldsymbol{q},\!\boldsymbol{v},\!\boldsymbol{\theta},\!\boldsymbol{p}^l,\!\boldsymbol{q}^l) \! \mid  & \nonumber \\
    \forall i \in \mathcal{N}_p: \,\,\, &\sum_{j \in \partial i} p_{ij}^l = p_i, \label{eq:flow_conservation_p} \\ \forall i \in \mathcal{N}_p: \,\,\, & \sum_{j \in \partial i} q_{ij}^l = q_i, \label{eq:flow_conservation_q}
    \\
    \forall (ij)  \in \mathcal{E}_p: \,\,\,  &p_{ij}^l \!=\! g_{ij}v_i^2  \nonumber \\ & -  g_{ij}v_iv_jcos(\theta_i-\theta_j)  \nonumber \\ & \, - b_{ij}v_iv_jsin(\theta_i-\theta_j) \label{eq:ac_ohms_law_p}
    \\
    \forall (ij)  \in \mathcal{E}_p: \,\,\, &q_{ij}^l \!=\! -\left(b_{ij} + \frac{b_{ij}^{sh}}{2}\right)v_i^2 \nonumber  \\ & \, + b_{ij}v_iv_j cos(\theta_i\!-\!\theta_j) \nonumber \\ & \, - g_{ij}v_iv_j sin(\theta_i\!-\!\theta_j) \label{eq:ac_ohms_law_q}
    \\
     \forall (ij)  \in \mathcal{E}_p: \,\,\, &p_{ij}^2 + q_{ij}^2 \leq \overline{s}_{ij}^2 \label{eq:ac_branch_flow_limit}
     \\
     \forall i  \in \mathcal{N}_p: \,\,\, &\underline{v}_i \leq v_i \leq \overline{v}_i. \label{eq:ac_voltage_bounds}
     \qquad \big\}
\end{align}
\end{subequations}
The AC power flow is defined for a power system represented by a set of nodes $\mathcal{N}_p$ connected by lines $\mathcal{E}_p$, with generators $\mathcal{G}_p$.  In the AC power flow equations, the production of a generator $g\in\mathcal{G}_p$ at a node $i\in\mathcal{N}_p$ is comprised by active and reactive power injections $p_i^g$ and $q_i^g$, respectively, with upper bound values $\overline{p}^g_i$ and $\overline{q}^g_i$.  The active and reactive power loads at node $i\in\mathcal{N}_p$ are $p_i^d$ and $q_i^d$, and the net active and reactive power injections at a node $i\in\mathcal{N}_p$ are $p_i$ and $q_i$. The voltage and voltage phase angle at a node $i\in\mathcal{N}_p$ are $v_i$ and $\theta_i$, respectively.  The variables $p_{ij}^l$ and $q_{ij}^l$ denote active and reactive power flows on a line $(i,j)\in\mathcal{E}_p$, and parameters $g_{ij}$ and $b_{ij}$ denote line conductance and susceptance, respectively. The allowable flow on a line is limited at the bound value $\overline{s}_{ij}$, and nodal voltage is limited by lower and upper bound values  $\underline{v}_{i}$ and $\overline{v}_{i}$.  We consider a shunt susceptance $b_{ij}^{sh}$.  For the AC OPF problem variables, we use a shorthand where $\boldsymbol{\theta}$, $\boldsymbol{p}^l$, and $\boldsymbol{q}^l$ are vectors containing $\theta_{ij}$, $p_{ij}^l$, and $q_{ij}^l$ for all $(i,j)\in\mathcal{E}_p$, respectively. 
We also define vectors $\boldsymbol{p}$, $\boldsymbol{q}$, and $\boldsymbol{v}$ that contain net nodal active and reactive power injections $p_i$ and $q_i$ and node voltages $v_i$ for all $i\in\mathcal{N}_p$. We also denote by $\boldsymbol{p}^g$ and $\boldsymbol{q}^g$ the vectors of active and reactive power production levels $p^g_i$ and $q_i^g$ for generators $g\in\mathcal{G}_p$.  In equations \eqref{eq:flow_conservation_p} and \eqref{eq:flow_conservation_p}, $\partial i \subset \mathcal{N}_p$ denotes the set of neighboring nodes to $i$, and we suppose that $p_{ji}^l=-p_{ij}^l$ and $q_{ji}^l=-q_{ij}^l$, so that the summations in equations \eqref{eq:flow_conservation_p} and \eqref{eq:flow_conservation_p} denote the total active and reactive power flowing from node $i$ into the network and balancing the net injections $p_i$ and $q_i$ into that node, respectively.  The AC-OPF problem is given as
\begin{subequations}    \label{eq:acopf}
\begin{align}
    \min \quad &\sum_{i \in \mathcal{N}_p} c^p_i(p_i^g) + c^q_i(q_i^g) \\
    \mbox{s.t.} \quad &(\boldsymbol{p}^g,\boldsymbol{q}^g, \boldsymbol{p},\boldsymbol{q}) \in \m_{ACP}. \\ \quad &(\boldsymbol{p},\!\boldsymbol{q},\!\boldsymbol{v},\!\boldsymbol{\theta},\!\boldsymbol{p}^l,\!\boldsymbol{q}^l) \in \s_{ACP}.
\end{align}
\end{subequations}
The AC OPF problem can be cast in the format of problem \eqref{eq:min_cost_formulation} using the variable correspondence \vspace{-.5ex}
\begin{align*}
    q \rightarrow (\boldsymbol{p}^g,\boldsymbol{q}^g), \quad x \rightarrow (\boldsymbol{p},\boldsymbol{q}), \quad y \rightarrow (\boldsymbol{p},\boldsymbol{q},\boldsymbol{v},\boldsymbol{\theta},\boldsymbol{p}^l,\boldsymbol{q}^l),
\end{align*}
and where the constraints  \eqref{eq:flow_conservation_p},  \eqref{eq:flow_conservation_p}, \eqref{eq:ac_ohms_law_p},  and  \eqref{eq:ac_ohms_law_q} correspond to the equality constraints $H(x,y)=0$ in \eqref{eq:H_fun_def}, and the constraints \eqref{eq:ac_branch_flow_limit} and \eqref{eq:ac_voltage_bounds} correspond to the inequality constraints $G(x,y)\leq0$ in \eqref{eq:G_fun_def}.  
The next theorem establishes revenue adequacy of local optima of the AC-OPF problem.

\begin{theorem} \label{thm:ac_shrinkage}
Any feasible point in $\s_{ACP}$ where the voltage lower bounds in equation \eqref{eq:ac_voltage_bounds} are not binding satisfies the LSS property Condition~\ref{con:local_condition}. If the point is also locally optimal and Condition~\ref{con:cq_condition} is satisfied, then it is revenue adequate.
\end{theorem}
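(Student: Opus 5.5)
Given a feasible point $(\boldsymbol{p},\boldsymbol{q},\boldsymbol{v},\boldsymbol{\theta},\boldsymbol{p}^l,\boldsymbol{q}^l)\in\s_{ACP}$ with $v_i>\underline{v}_i$ for all $i$, we exhibit an explicit curve in $\s_{ACP}$ whose $x$-projection is $s(\boldsymbol{p},\boldsymbol{q})$ for $s\in[1-\epsilon,1]$. The key observation is that the AC branch flow equations \eqref{eq:ac_ohms_law_p}--\eqref{eq:ac_ohms_law_q} are homogeneous of degree two in the voltage magnitudes when the angles are held fixed. Every term has the form $v_i^2$ or $v_iv_j$ times a function of $\theta_i-\theta_j$. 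So we define, for $s\in(0,1]$,
\begin{align*}
v_i^s=\sqrt{s}\,v_i,\quad \theta_i^s=\theta_i,\quad p_{ij}^{l,s}=s\,p_{ij}^l,\quad q_{ij}^{l,s}=s\,q_{ij}^l,\quad p_i^s=s\,p_i,\quad q_i^s=s\,q_i .
\end{align*}

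We then check each constraint of $\s_{ACP}$ in turn.
\begin{itemize}
\item The conservation equations \eqref{eq:flow_conservation_p}--\eqref{eq:flow_conservation_q} are linear, so they hold after multiplying by $s$.
\item The flow equations \eqref{eq:ac_ohms_law_p}--\eqref{eq:ac_ohms_law_q} hold because substituting $v^s$ multiplies every right-hand side by $s$, while the trigonometric factors are unchanged since the angles are fixed.
\item The branch limits \eqref{eq:ac_branch_flow_limit} hold because the left-hand side becomes $s^2(p_{ij}^2+q_{ij}^2)\le p_{ij}^2+q_{ij}^2\le\overline{s}_{ij}^2$ for $s\le1$.
\item The upper voltage bounds hold since $\sqrt{s}\,v_i\le v_i\le\overline{v}_i$.
\end{itemize}

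The only constraint that can fail is the lower voltage bound $\sqrt{s}\,v_i\ge\underline{v}_i$. This is where the non-binding hypothesis is used. Since $v_i>\underline{v}_i\ge 0$ at every node and there are finitely many nodes, we set
\begin{align*}
\epsilon=\min\Big\{\tfrac12,\ \min_{i\in\mathcal{N}_p}\big(1-\underline{v}_i^2/v_i^2\big)\Big\}.
\end{align*}
This $\epsilon$ lies in $(0,1)$, and for every $s\in[1-\epsilon,1]$ we get $\sqrt{s}\,v_i\ge\underline{v}_i$. Hence $s(\boldsymbol{p},\boldsymbol{q})\in(\s_{ACP})_x$ for all $s\in[1-\epsilon,1]$, which is exactly Condition~\ref{con:local_condition}. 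The case $\underline{v}_i=0$ is trivial, since then the bound is never active. This step is the main point to handle carefully. Without the hypothesis the scaling is blocked immediately, which is also consistent with the known failure of global revenue adequacy for AC OPF.

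For the second claim, we use that the AC OPF is an instance of \eqref{eq:min_cost_formulation} under the stated variable correspondence. Once the point is locally optimal and satisfies Condition~\ref{con:cq_condition}, the LSS branch of Theorem~\ref{thm:revenue_adequacy_general} applies directly and yields $\rev=-\lambda^T x\ge 0$, where $x=(\boldsymbol{p},\boldsymbol{q})$ and $\lambda$ collects the multipliers for both the active and reactive conservation laws.
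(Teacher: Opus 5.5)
Your proof is correct and follows the same route as the paper. Both use the scaling $(s\boldsymbol{p},s\boldsymbol{q},\sqrt{s}\,\boldsymbol{v},\boldsymbol{\theta},s\boldsymbol{p}^l,s\boldsymbol{q}^l)$, which works because the flow equations are homogeneous of degree two in voltage magnitude; the non-binding lower voltage bounds then supply $\epsilon$, and the LSS branch of Theorem~\ref{thm:revenue_adequacy_general} finishes the argument. You are more explicit than the paper, with a concrete $\epsilon$ and a constraint-by-constraint check, and you rightly conclude $\rev\ge 0$ rather than the paper's strict inequality.
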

\begin{proof} 
By homogeneity of the power flow equations, given a point $(\boldsymbol{p},\boldsymbol{q},\boldsymbol{v},\boldsymbol{\theta},\boldsymbol{p}^l,\boldsymbol{q}^l) \in \s_{ACP}$, then the point $(s\boldsymbol{p},s\boldsymbol{q},\sqrt{s} \boldsymbol{v},\boldsymbol{\theta},s\boldsymbol{p}^l, s\boldsymbol{q}^l)$ satisfies \eqref{eq:flow_conservation_p}-\eqref{eq:ac_branch_flow_limit}. Further, if the voltage lower bounds are not tight, i.e., $v_i > \underline{v}_i$ for all $i \in \mathcal{V}$, then there exists $\epsilon\in(0,1)$ such that $ \underline{v}_i \leq \sqrt{s}v_i \leq \overline{v}_i$ for all $1-\epsilon \leq s \leq 1$. Therefore the direction $(-\boldsymbol{p},-\boldsymbol{q})$ is feasible and $(\boldsymbol{p},\boldsymbol{q},\boldsymbol{v},\boldsymbol{\theta},\boldsymbol{p}^l,\boldsymbol{q}^l)$ satisfies the LSS property. Therefore $\s_{ACP}$ is LSS, and by Theorem \ref{thm:revenue_adequacy_general}, $\mathcal R >0$.
\end{proof}

\section{Conclusions}       \label{sec:conc}
\vspace{-.5ex}

We present concise and general conditions for revenue adequacy in markets for energy transport, and verify the assumptions for DC power flow, AC power flow, and steady-state gas pipeline flow. 
Our general theory of revenue adequacy for network flows enables policy decisions for relieving congestion as well as FTR market mechanisms that do not require strict convexity of feasible sets. The adoption of contemporary energy market designs such as FTR mechanisms has been suggested to require policies to accommodate congestion revenue shortfalls \cite{lesieutre2005convexity}.  Our results provide a constructive technique to verify that FTR mechanisms that rely on AC power flow modeling can be revenue adequate under certain conditions.  \vspace{1ex}
 
\bibliographystyle{IEEEtran}
\linespread{.96}
\bibliography{references}

\end{document}